\begin{document}

\title{Bounds on metric dimension\\for families of planar graphs}
\renewcommand{\thetitle}{Bounds on metric dimension for families of planar graphs}

\author[1]{Carl Joshua Quines\thanks{Corresponding author. E-mail address: \mailto{cjquines0@gmail.com}.}}
\author[2]{Michael Sun\thanks{E-mail address: \mailto{michael03px2018@sasstudent.org}.}}
\affil[1]{Valenzuela City School of Mathematics and Science, Valenzuela, Philippines}
\affil[2]{Shanghai American School, Shanghai, China}

\date{March 11, 2017}

\maketitle

\begin{abstract}
  The concept of metric dimension has applications in a variety of fields, such as chemistry, robotic navigation, and combinatorial optimization. We show bounds for graphs with $n$ vertices and metric dimension $\beta$. For Hamiltonian outerplanar graphs, we have $\beta \leq \ceil{\frac{n}2}$; for outerplanar graphs in general, we have $\beta \leq \floor{\frac{2n}{3}}$; for maximal planar graphs, we have $\beta \leq \floor{\frac{3n}{4}}$. We also show that bipyramids have a metric dimension of $\floor{\frac{2n}{5}} + 1$. It is conjectured that the metric dimension of maximal planar graphs in general is on the order of $\floor{\frac{2n}{5}}$.

  \quad\\\emph{2010 Mathematics Subject Classification.} 05C10, 05C12.\\
  \emph{Key words and phrases.} metric dimension, resolving set, outerplanar, maximal planar.
\end{abstract}

\section{Introduction}

Consider a robot navigating through a graph, which wants to know its current vertex. It sends a signal to each of several landmarks, which send a signal back in turn, so the robot knows its distance to each landmark.

If the set of landmarks is well-chosen, then given the distances to each of the landmarks, the robot can uniquely determine which vertex it is in. We call such a set of landmarks a resolving set. The metric dimension of the graph is the number of landmarks in the smallest resolving set.

The metric dimension of a graph is well-studied. It was first initiated by Harary and Melter in 1976 \cite{H}, and discovered independently by Slater in 1988 \cite{S}. This invariant has applications in robotics \cite{K} and chemistry \cite{J}.

Much progress has been made in studying the metric dimensions of specific graph families: trees, grids \cite{K}, paths, cycles, complete graphs, wheels, some hypercubes \cite{C}, among other results. In this paper, we take a more general direction by finding bounds on the metric dimension for certain families of planar graphs.

We now give formal definitions of some terms. For the rest of the paper, we assume all graphs are finite, simple, and connected. Let $G$ be such a graph. We say a vertex $u$ in $G$ \emph{resolves} two other vertices $v$ and $w$ in $G$ if the distance from $u$ to $v$ is not equal to the distance from $u$ to $w$. A subset of vertices of $G$ is a \emph{resolving set} of $G$ if there exists a vertex in the set that resolves each pair of vertices in $G$. The \emph{metric dimension} of $G$ is the smallest cardinality of a resolving set, which we will denote by $\beta(G)$, or simply $\beta$ when the context is clear.

A graph is said to be \emph{planar} if there exists a drawing of it on the plane such that no two edges intersect except at the vertices. Planar graphs have \emph{faces}, which are contiguous regions enclosed by edges. We call a planar graph \emph{maximal planar} if all the faces of the graph are enclosed by three edges. We call a planar graph \emph{outerplanar} if it can be drawn such that all its vertices are on the outer face. A graph is \emph{Hamiltonian} if there exists a cycle that goes through all vertices exactly once.

The main result of the paper is the following:

\begin{theorem}
  \label{main}
  For the following families of graphs with $n$ vertices and metric dimension $\beta$:
  \begin{enumerate}[label=(\alph*),ref=\ref{main}\alph*]
    \item\label{first} outerplanar hamiltonian graphs satisfy $\beta \leq \ceil{\frac{n}{2}}$.
    \item\label{second} outerplanar graphs satisfy $\beta \leq \floor{\frac{2n}{3}}$.
    \item\label{third} maximal planar graphs satisfy $\beta \leq \floor{\frac{3n}{4}}$.
  \end{enumerate}
\end{theorem}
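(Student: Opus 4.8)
The plan is to recast each bound as an \emph{omission} problem. For any vertex $u$ in a resolving set, $u$ resolves itself from every other vertex since $d(u,u)=0$; hence if $T=V(G)\setminus S$ is the set of omitted vertices, then $S$ is resolving as soon as every \emph{pair} inside $T$ is resolved by some vertex of $S$. I would first prove the following convenient sufficient condition: if $T$ is an independent set whose vertices have pairwise distinct open neighborhoods, then $S=V(G)\setminus T$ is a resolving set. Indeed, for $t_1,t_2\in T$ with $N(t_1)\neq N(t_2)$ there is a vertex $u$ adjacent to exactly one of them; independence forces $u\in S$, and then $d(u,t_1)=1\neq d(u,t_2)$. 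Each part then reduces to exhibiting a large omittable set $T$ — of size $\floor{n/2}$, $\ceil{n/3}$, and $\ceil{n/4}$ respectively — that is independent and twin-free.

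For part \ref{first} I would work directly on the Hamiltonian cycle $v_1v_2\cdots v_n$, which by outerplanarity bounds a disk whose chords are pairwise non-crossing. Here the target $\floor{n/2}$ can exceed the independence number, so rather than a single color class I would take $S$ to be a vertex cover of the cycle (alternate vertices), of size $\ceil{n/2}$. Chords may still join two omitted vertices, so I cannot invoke the neighborhood lemma directly and would instead verify resolution by a direct distance computation: distances from a fixed landmark are $1$-Lipschitz along the boundary, and the non-crossing chords should force the distance patterns of any two distinct omitted vertices on a common arc to differ at some landmark. The degenerate cases are short cycles where omitted vertices become genuine twins — as in $C_4$, where alternate vertices fail — which I would fix by shifting to two adjacent landmarks.

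For parts \ref{second} and \ref{third} I would instead build $T$ from the sparse structure. An outerplanar graph is $3$-colorable and a maximal planar graph is $4$-colorable, so each admits an independent set meeting the $n/3$ and $n/4$ thresholds; alternatively one can peel off the degree-$2$ ``ears'' of the triangulated structure via the weak-dual tree and omit one apex per ear to get the same rates constructively. The forbidden minors then control twins: outerplanar graphs contain no $K_{2,3}$ and planar graphs no $K_{3,3}$, so any class of mutual false twins has size at most two. This lets me upgrade ``independent'' to ``independent with distinct neighborhoods'' by evicting at most one vertex from each twin pair into $S$.

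The main obstacle is exactly this twin-accounting, together with the degree-$1$ case. Pendant vertices form arbitrarily large false-twin classes — a star $K_{1,m}$ has metric dimension $m-1$ — so the clean statement really needs minimum degree at least two, which is automatic under Hamiltonicity in \ref{first} and under maximality in \ref{third} but is a genuine caveat for general outerplanar graphs in \ref{second}. Even with minimum degree two, evicting twins threatens to shrink $T$ below the target fraction, so the delicate step is to show the eviction cost is absorbed — for instance, by charging twin pairs to disjoint structural pieces (ears, or separating triangles in the planar case) so that the surviving omittable set still meets $\floor{n/2}$, $\ceil{n/3}$, and $\ceil{n/4}$. I expect the distance-monotonicity verification in \ref{first} and this fractional twin-accounting in \ref{second}--\ref{third} to be the hardest parts of the argument.
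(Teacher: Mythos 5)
Your high-level strategy matches the paper's: part \ref{first} uses the same vertex cover of the Hamiltonian cycle, and parts \ref{second}, \ref{third} use the same ``properly color and omit the largest color class'' idea (the paper packages this as $\beta \leq \lfloor\frac{\chi-1}{\chi}n\rfloor + n - s$, where $s$ is the number of alikeness classes). But the two steps you yourself flag as delicate are exactly where the content lies, and your proposal does not supply it. In \ref{first}, the omitted set is indeed not independent, but the fix is not a global distance computation: if two omitted vertices $v_i, v_j$ are unresolved, then each of the \emph{selected} cycle-neighbours $v_{i\pm1}, v_{j\pm1}$ (which sits at distance $1$ from one of the pair) must be adjacent to both, and this forces crossing chords and hence a $K_4$ minor, contradicting outerplanarity. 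You have the right ingredient (non-crossing chords) but not the argument; note that only the distance-$1$ test against these four landmarks is needed, not Lipschitz monotonicity of whole distance vectors.

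The larger gap is the twin accounting in \ref{second} and \ref{third}. Bounding false-twin classes by $2$ does not suffice: if the omitted color class decomposes entirely into twin pairs, eviction halves it and the target sizes $\lceil n/3\rceil$ and $\lceil n/4\rceil$ are lost; your ``charging to disjoint structural pieces'' is the whole difficulty and is not carried out. The paper instead proves structural lemmas strong enough to make eviction essentially free: a biconnected outerplanar graph on at least five vertices has \emph{no} alike pair at all (via $K_{2,3}$), and a maximal planar graph containing even one alike pair must \emph{be} a bipyramid, which is then dispatched by computing its metric dimension exactly as $\lfloor 2n/5\rfloor + 1$. Without a dichotomy of that strength your plan does not close. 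Finally, your worry about pendant twins is well-founded and sharper than you state: $K_{1,6}$ is outerplanar with $n = 7$ and $\beta = n-2 = 5 > 4 = \lfloor 2n/3\rfloor$, so part \ref{second} genuinely needs an extra hypothesis such as minimum degree two. (The paper's own proof elides this when it asserts that the paths joining biconnected components contain no alike vertices; leaves at a common cut vertex are alike.) Do not treat that caveat as a technicality to be absorbed --- it is a counterexample to the statement as written.
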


\section{Selection through neighbors}

Recall that the \emph{neighbors} of a vertex are the vertices adjacent to it. We say two vertices are \emph{alike} if they have the same neighbors. Note that two adjacent vertices can never be alike, as no vertex is its own neighbor.

At this point, we provide some new definitions. We begin with a set of vertices of a graph $G$, which we call the \emph{working set}. To \emph{select} a vertex would be to add it to the working set. We say that a vertex $u$ in $G$ is \emph{resolved} if, for all other vertices $v$ in $G$, there exists a vertex in the working set that resolves $u$ and $v$.

\begin{lemma}
  To resolve a vertex, it is necessary to select all vertices alike to it.
\end{lemma}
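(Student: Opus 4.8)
The plan is to reduce the lemma to a single distance-invariance fact about alike vertices: if $u$ and $v$ are alike and distinct, then $d(w,u) = d(w,v)$ for every vertex $w \notin \{u,v\}$. Granting this, the lemma is almost immediate. The only vertices that can resolve the pair $u,v$ are $u$ and $v$ themselves, since every other vertex $w$ satisfies $d(w,u) = d(w,v)$ and so fails to resolve them. Hence, for $u$ to be resolved, the working set must contain, for each vertex $v$ alike to $u$, a resolver of the pair $u,v$; the only candidate other than $u$ is $v$ itself, so each such $v$ must be selected (the sole way to avoid selecting a given $v$ is to select $u$ itself).

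To establish the distance-invariance fact, I would compare shortest paths, using that $u$ and $v$ share the same neighborhood $N(u) = N(v)$ (recall alike vertices are never adjacent, so the fact that a vertex is not its own neighbor causes no difficulty here). Fix $w \notin \{u,v\}$; since $G$ is connected, $d(w,u)$ equals some finite $k \geq 1$. If $w$ is adjacent to $u$, then $w \in N(u) = N(v)$, so $w$ is also adjacent to $v$ and $d(w,u) = d(w,v) = 1$. Otherwise $k \geq 2$; take a shortest path $w = x_0, x_1, \dots, x_k = u$, whose penultimate vertex $x_{k-1}$ lies in $N(u) = N(v)$. Replacing the endpoint $u$ by $v$ yields a walk $w = x_0, \dots, x_{k-1}, v$ of length $k$, so $d(w,v) \leq k = d(w,u)$. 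By the symmetric argument $d(w,u) \leq d(w,v)$, and equality follows.

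The one point I would check carefully is that this argument already covers the case where $w$ is itself alike to $u$ and $v$: such a $w$ is non-adjacent to both (mutually alike vertices are pairwise non-adjacent), so it falls into the $k \geq 2$ case and the same penultimate-vertex substitution applies. I expect this equidistance claim to be the only real obstacle; once it is in hand, the necessity statement is a direct consequence, since the resolution of each alike pair $u,v$ can be achieved only by selecting $u$ or $v$, forcing every vertex alike to $u$ into the working set.
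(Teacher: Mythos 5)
Your proposal is correct and follows essentially the same route as the paper: both arguments rest on the observation that alike vertices are equidistant from every other vertex, so only the pair themselves can resolve one another. You merely supply the shortest-path (penultimate-vertex substitution) justification for the equidistance claim, which the paper asserts without proof.
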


\begin{proof}
  Consider a pair of alike vertices. Since they share the same neighbors, it follows that they have the same distances to all other vertices in the graph. Thus, none of the other vertices in the graph can resolve this pair. It is then necessary to select either vertex.
\end{proof}

We note that alikeness is an equivalence relation: it is reflexive, symmetric, and transitive. It then partitions the vertices of a graph into several equivalence sets. We denote by $s$ the number of these equivalence sets. Applying the above lemma gives an immediate lower bound on the metric dimension:

\begin{corollary}
  For a graph with $n$ vertices and metric dimension $\beta$, partitioned by alikeness to $s$ equivalent sets, we have $\beta \geq n-s.$
\end{corollary}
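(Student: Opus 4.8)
The plan is to count, within each alikeness class, how many vertices every resolving set is forced to contain. Fix a minimum resolving set $R$, so that $|R| = \beta$. My first step is to argue, using the preceding lemma, that $R$ can omit at most one vertex from each equivalence class. Suppose instead that two distinct vertices $u$ and $v$ lie in the same class and both avoid $R$. Since $u$ and $v$ are alike, the observation behind the lemma shows they are equidistant from every other vertex, so no vertex outside $\{u,v\}$ resolves the pair $(u,v)$; as neither $u$ nor $v$ belongs to $R$, the set $R$ fails to resolve $(u,v)$, contradicting that $R$ is a resolving set.

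Next I would convert this local restriction into a global count. Writing $k_1, \dots, k_s$ for the sizes of the $s$ equivalence classes, the previous step says $R$ must contain at least $k_i - 1$ vertices from the $i$th class, since at most one can be left out. Summing over all classes and using $\sum_i k_i = n$ yields
\[
  \beta = |R| \geq \sum_{i=1}^{s} (k_i - 1) = \Big(\sum_{i=1}^{s} k_i\Big) - s = n - s,
\]
which is exactly the claimed inequality.

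The argument is short, so there is little in the way of a genuine obstacle; the one point demanding care is the first step, where the lemma must be invoked correctly. The lemma is phrased as ``to resolve a vertex it is necessary to select all alike vertices,'' whereas what I actually need is the closely related statement that an alike pair can only be resolved from within the pair itself. Both rest on the single fact that alike vertices are equidistant from every other vertex, so I would lean on that fact directly rather than on the exact wording of the lemma, keeping the deduction clean and making the summation step entirely routine.
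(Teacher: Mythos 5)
Your argument is correct and is precisely the reasoning the paper intends: the corollary is stated as an immediate consequence of the preceding lemma, whose proof already contains the key fact that alike vertices are equidistant from all other vertices, so any resolving set omits at most one vertex per alikeness class, and summing $k_i - 1$ over the $s$ classes gives $n - s$. Your added care in noting that the lemma's literal wording differs slightly from the ``at most one omitted per class'' form you actually use is a reasonable clarification, not a departure from the paper's approach.
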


The crux idea of this section is the following lemma. It gives the ability to construct a resolving set by resolving each vertex through selecting its neighbors.

\begin{lemma}
  \label{selection}
  For a vertex to be resolved, it is sufficient to either:
  \begin{itemize}
    \item select that vertex in the resolving set, or
    \item select all its neighbors and all vertices alike to it.
  \end{itemize}
\end{lemma}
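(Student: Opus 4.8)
The plan is to prove the two sufficient conditions separately. The first bullet is essentially immediate: if a vertex $u$ is itself in the working set, then for any other vertex $v$, the vertex $u$ resolves the pair $(u,v)$ since the distance from $u$ to itself is $0$ while the distance from $u$ to $v$ is positive (as the graph is connected and simple, so $v \neq u$ forces this distance to be at least $1$). Thus $u$ is resolved by itself.

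The substance is in the second bullet. First I would fix a vertex $u$, assume the working set contains all neighbors of $u$ together with all vertices alike to $u$, and take an arbitrary other vertex $v$; the goal is to exhibit a vertex in the working set that resolves $u$ and $v$. I would split into cases based on how $v$ relates to $u$. The clean case is when $v$ is \emph{not} alike to $u$: then $u$ and $v$ have different neighbor sets, so there exists a vertex $w$ adjacent to exactly one of them. Such a $w$ is a neighbor of $u$ (hence in the working set) or a neighbor of $v$; I would argue that in either situation I can find a working-set vertex at distance $1$ from one of $u,v$ but at distance $\neq 1$ from the other, which resolves the pair. The remaining case is when $v$ \emph{is} alike to $u$: then $v$ itself lies in the working set by hypothesis, and $v$ resolves $(u,v)$ by the argument of the first bullet applied to $v$.

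The main obstacle is the non-alike case, and specifically making the distance comparison airtight. Knowing only that some $w$ is adjacent to exactly one of $u,v$ gives $d(w,u)=1$ or $d(w,v)=1$, but I must rule out the possibility that the other distance also equals $1$ in a way that fails to resolve — i.e. I need a working-set vertex whose distances to $u$ and $v$ genuinely differ. The safest route is to take $w$ to be a neighbor of $u$ (so $w$ is in the working set and $d(w,u)=1$) that is \emph{not} a neighbor of $v$, forcing $d(w,v)\neq 1$; such a $w$ exists precisely when $u$ has a neighbor $v$ lacks. I would need to handle the symmetric subcase where instead $v$ has a neighbor $u$ lacks, and confirm that at least one of these subcases always occurs when $u,v$ are not alike, thereby always producing a resolving vertex that actually sits in the working set.

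I would close by noting these cases are exhaustive — every $v \neq u$ is either alike to $u$ or not — so in all cases some working-set vertex resolves $u$ and $v$, establishing that $u$ is resolved and completing the proof of the second sufficient condition.
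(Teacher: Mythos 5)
Your treatment of the first bullet and of the case where $v$ is alike to $u$ matches the paper and is fine. The problem is exactly the subcase you flag and leave open, and it cannot be closed: when $u\not\sim v$ and every neighbor of $u$ is also a neighbor of $v$ but not conversely (i.e.\ $N(u)\subseteq N(v)$ with $N(u)\neq N(v)$), the claim fails outright. Every selected neighbor $w\in N(u)$ then satisfies $d(w,u)=d(w,v)=1$, and every vertex $x$ alike to $u$ satisfies $d(x,u)=d(x,v)=2$ (since $N(x)=N(u)\subseteq N(v)$ and $x$ is adjacent to neither $u$ nor $v$), so no vertex of the prescribed working set separates the pair. A concrete counterexample is the path $P_4$ with vertices $u,a,v,c$ and edges $ua$, $av$, $vc$: no two vertices have equal neighborhoods, so the working set prescribed for $u$ is exactly $N(u)=\{a\}$, yet $d(a,u)=d(a,v)=1$ and $u$ is not resolved from $v$. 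Your observation that non-alikeness guarantees a vertex adjacent to exactly one of $u,v$ is correct, but it only hands you a vertex that is guaranteed to lie in the working set when that vertex can be taken in $N(u)\setminus N(v)$; in the subcase above that set is empty.

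For comparison, the paper's own proof is a one-line assertion that commits the same error: it claims that any vertex agreeing with $u$ in its distances to the selected vertices ``has to be alike to it,'' whereas such a vertex $w$ is only forced to satisfy $N(u)\subseteq N(w)$, not $N(u)=N(w)$. So your more explicit case analysis has not merely left a gap in your own write-up --- it has surfaced a genuine flaw in the lemma as stated. To repair the statement one must strengthen the second bullet, for instance by requiring the selection of all neighbors of $u$ together with every vertex $w$ satisfying $N(u)\subseteq N(w)$, or by adding the hypothesis that no other vertex's neighborhood contains $N(u)$; the later applications of the lemma in the paper would then need to be rechecked against the corrected hypothesis.
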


\begin{proof}
  It is clear that any selected vertex $v$ resolves itself: no other vertex in the graph has distance zero to $v$. If all of the neighbors and alike vertices to $v$ are selected, then it is resolved: any vertex with the same neighbor set by $v$ has to be alike to it, and all these vertices are resolved as well.
\end{proof}

We now prove the first main result, Theorem \ref{first}.

\begin{proof}[Proof of Theorem \ref{first}]
  Label the vertices of the Hamiltonian cycle as $v_1, v_2, \ldots, v_n$ and select all $v_i$ such that $i$ is odd. This selects at most $\ceil{\frac{n}2}$ vertices. We claim that this is a resolving set for the graph.

  It is clear by the Lemma \ref{selection} that each selected vertex is resolved. Suppose for the sake of contradiction there exists two vertices which are not resolved, namely $v_i, v_j$. If they are not resolved, then they must share the same neighbors. We have two cases:

  \begin{itemize}
    \item $\abs{i-j}=2$. If this is the case, $v_j$ must be adjacent to $v_{i-1}$ and $v_i$ must be adjacent to $v_{j+1}.$ It is then clear, through contracting the appropriate paths, that $K_4$ would be a minor of the graph, contradicting outerplanarity.

    \item $\abs{i-j}>2$. It must be the case that $v_{j-1}$ and $v_{j+1}$ are adjacent to $v_i$. Once again, $K_4$ would be a minor of the graph, contradicting outerplanarity.\qedhere
  \end{itemize}
\end{proof}

\section{Restricting alike vertices}

We present two lemmas restricting the pairs of alike vertices in outerplanar and maximal planar graphs. These will be used in the subsequent section to produce bounds on their metric dimension.

\begin{lemma}
  If an outerplanar graph has at least five vertices and is biconnected, then it does not have a pair of alike vertices.
\end{lemma}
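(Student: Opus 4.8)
The plan is to argue by contradiction: suppose $G$ is biconnected and outerplanar with $n \geq 5$ vertices, yet contains an alike pair $u, v$. Since alike vertices are never adjacent, $u$ and $v$ are nonadjacent and share a common neighbor set $S = N(u) = N(v)$. Biconnectivity forces every vertex to have degree at least two, so $|S| \geq 2$. I would then split into two cases according to the size of $S$, in each exhibiting a structure forbidden in outerplanar graphs.

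First, if $|S| \geq 3$, pick three distinct vertices $a, b, c \in S$. Then $u$ and $v$ are each adjacent to all of $a, b, c$, so the six edges joining each of $u, v$ to each of $a, b, c$ form a copy of $K_{2,3}$ as a subgraph. Since $K_{2,3}$ is not outerplanar and outerplanarity is closed under taking subgraphs, this contradicts the outerplanarity of $G$.

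The remaining case is $|S| = 2$, say $S = \{a, b\}$, so that both $u$ and $v$ have degree exactly two with neighborhood $\{a, b\}$. Here I would invoke the standard structural fact that in a biconnected outerplanar graph the boundary of the outer face is a Hamiltonian cycle $H$ (equivalently, such a graph is a cycle together with noncrossing chords). Because $u$ has degree two, its two neighbors along $H$ are exactly its only neighbors, namely $a$ and $b$; the same holds for $v$. Thus in the cyclic order of $H$ the vertex $a$ is flanked by both $u$ and $v$, and so is $b$, which forces $H$ to be the $4$-cycle $u, a, v, b$. This gives $n = 4$, contradicting $n \geq 5$ and completing the proof.

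The main obstacle is the degree-two case: the case $|S| \geq 3$ is immediate from a local forbidden-subgraph count, but ruling out two twin degree-two vertices genuinely requires the global structure of biconnected outerplanar graphs. I would lean on the Hamiltonian-boundary fact to keep this clean. If one prefers to avoid citing it, an alternative is to use biconnectivity to produce, for a fifth vertex $w$, internally disjoint paths from $w$ to $a$ and to $b$ avoiding $u$ and $v$, and then contract these paths to extract a $K_{2,3}$ minor on parts $\{a, b\}$ and $\{u, v, w\}$; but keeping those two paths disjoint is precisely the delicate point that the Hamiltonian-cycle viewpoint sidesteps.
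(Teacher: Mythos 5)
Your proof is correct, and the first two cases (degree at least two from biconnectivity, and the $K_{2,3}$ subgraph when $|S|\geq 3$) coincide with the paper's argument. Where you genuinely diverge is the case $|S|=2$: the paper uses the hypothesis $n\geq 5$ to produce a fifth vertex $w$ and then appeals to biconnectivity to find paths from $w$ to each of $a$ and $b$ avoiding the other, contracting them to exhibit a $K_{2,3}$ minor; you instead invoke the structural fact that a biconnected outerplanar graph has a Hamiltonian outer cycle, observe that the two degree-two twins must each sit between $a$ and $b$ on that cycle, and conclude the cycle is the $4$-cycle $u,a,v,b$, contradicting $n\geq 5$ directly. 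Your route trades the forbidden-minor argument for a cited structure theorem, and as you note it neatly sidesteps the one delicate point in the paper's version, namely verifying that the contracted paths avoid $u$, $v$, and the other neighbor so that the branch sets are genuinely disjoint (this does go through, since any path avoiding $b$ cannot pass through the degree-two vertices $u$ or $v$, but the paper leaves it implicit). The paper's approach is more self-contained in that it only uses the $K_{2,3}$ characterization of outerplanarity already needed for the $|S|\geq 3$ case, whereas yours is arguably cleaner but rests on an additional (standard) structural lemma. Either is acceptable.
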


\begin{proof}
  \label{outerplanar}
  Assume, for the sake of contradiction, that there exists a pair of alike vertices. We will do casework on the number of their common neighbors.
  \begin{itemize}
    \item Both vertices cannot have only one common neighbor, as each vertex in the graph has degree at least two, following from the fact the graph is biconnected.
    \item If both vertices have at least three common neighbors, it would follow that $K_{2,3}$ is a subgraph. This contradicts the well-known characterization that no subdivision of $K_{2,3}$ is outerplanar.
    \item It is then the case that both vertices are connected to two common neighbors. As there are at least five vertices, there must exist another vertex $v$, distinct from the alike vertices.

    From biconnectedness, it follows that there exists a path from $v$ to either neighbor vertex that does not pass through the other neighbor vertex. Contracting these paths produces $K_{2,3}$, contradicting once again the well-known characterization that no subdivision of $K_{2,3}$ is outerplanar.\qedhere
  \end{itemize}
\end{proof}

To present the subsequent lemma, we must define the graph of the \emph{bipyramid}. It is the graph with vertex set $\{a_1, a_2\} \cup \{b_1, b_2, \ldots, b_n\}$ and edge set $$\{a_ib_j | i \in [1, 2], j \in [1, n]\} \cup \{b_ib_{i+1} | i \in [1, n], b_{n+1} = b_1\}.$$

\begin{lemma}
  If a maximal planar graph contains a pair of alike vertices, it is a bipyramid.
\end{lemma}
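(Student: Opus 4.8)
The plan is to show that the alike pair are forced to be the apexes of a bipyramid, with their common neighborhood serving as the equatorial cycle. Let $u$ and $v$ be the alike pair and write $N = N(u) = N(v)$ for their common neighborhood, with $\abs{N} = k$; since a maximal planar graph has minimum degree at least three, $k \geq 3$. The first step is to recall the standard fact that in a maximal planar graph the neighbors of any vertex form a cycle (its \emph{link}): the triangular faces incident to a vertex fan around it and close up. Applying this to $u$, the set $N$ carries a cycle $C = w_1 w_2 \cdots w_k$, in which consecutive $w_i$ are adjacent.

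Next I would use planarity to locate $v$. Viewed on the sphere, the cycle $C$ separates it into two closed disks. The disk on the side of $u$ is triangulated by the faces $u w_i w_{i+1}$, so its interior contains no vertex other than $u$; since $v \neq u$ and $v \notin N$, the vertex $v$ must lie in the interior of the other disk $D$. Now $v$ is adjacent to every boundary vertex of $D$, so the spokes $v w_1, \ldots, v w_k$ are drawn inside $D$ without crossings, and by planarity their rotational order about $v$ agrees with the cyclic order of the $w_i$ along $C$. These spokes cut $D$ into regions $R_1, \ldots, R_k$, where the boundary of $R_i$ is the triangle on $v, w_i, w_{i+1}$, together with whatever lies inside.

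The main step --- and the place where the hypothesis $N(u) = N(v)$ does the real work --- is to show that each region $R_i$ has no interior vertices. I would argue by inspecting the corner at $v$: the face of $R_i$ incident to $v$ along the edge $v w_i$ is a triangle $v w_i y$, and in a triangulation $y$ must be a neighbor of $v$. If $R_i$ contained an interior vertex, then $y$ could not be a boundary vertex of $R_i$, since otherwise $v w_i w_{i+1}$ would already be a face and $R_i$ would be empty; hence $y$ would be an interior vertex, giving $v$ a neighbor outside $N$ and contradicting $N(v) = N$. Therefore every $R_i$ is exactly the face $v w_i w_{i+1}$.

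Finally, this makes $D$ the wheel triangulated by $v$, so gluing the two wheels along $C$ accounts for the entire sphere: the vertex set is exactly $\{u, v\} \cup N$, the only edges are the spokes $u w_i$ and $v w_i$ together with the rim edges $w_i w_{i+1}$, and no chord of $C$ can survive. This is precisely the bipyramid with apexes $u, v$ and equator $C$. I expect the corner argument of the third step to be the crux; everything else is bookkeeping about the planar embedding, with the one subtlety being the standard claim, which I would state carefully, that the link of a vertex in a maximal planar graph is a single cycle.
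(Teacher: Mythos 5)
Your proof is correct, and it reaches the bipyramid by a genuinely different route than the paper. The paper never invokes the link of a vertex: it builds the equatorial cycle from scratch by choosing two common neighbors $s, t$ so that the quadrilateral region $asbt$ contains no other common neighbor, showing that region can contain no vertex at all (any such vertices would merely subdivide the edge $st$ and could not be completed to triangular faces), deducing that $st$ is an edge, and iterating around; it then empties the triangular regions $ast$ by the same subdivision argument. You instead import the standard fact that the link of a vertex in a triangulation is a cycle, which hands you the equator $C = N(u)$ immediately, and your emptiness argument is local rather than global: you look at the triangular face at the corner of $R_i$ along $vw_i$ and observe that a nonempty region would force $v$ to have a neighbor outside $N$, contradicting $N(v) = N(u)$. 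Your version is shorter and arguably cleaner, at the cost of leaning on the link-is-a-cycle fact (which, as you note, must be stated carefully --- it holds for maximal planar graphs on at least four vertices, and the degenerate case $K_3$ is harmless here since it has no non-adjacent pair, let alone an alike one); the paper's version is more self-contained, effectively re-deriving the relevant piece of that fact in situ. Both proofs exploit the same underlying tension: a triangulation forces faces at every corner, and the alikeness hypothesis forbids the extra neighbors those faces would create.
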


\begin{proof}
  \label{maximalplanar}
  Suppose that there exists a pair of alike vertices $a, b$ in the graph and consider the set $S$ of their common neighbors. Let $s, t \in S$ be two distinct vertices such that the region $asbt$ does not contain any other vertices in $S$ inside. (If there were, we can pick that vertex instead of $s$ and repeat the process.)

  We claim that the region enclosed by $asbt$ does not contain any vertices not from $S$. Indeed, if there were several vertices in this region $v_1, v_2, \ldots, v_i$, then none of these can be connected to $a$ or $b$. Thus $sv_1v_2\cdots v_it$ is a path, for some numbering of the vertices. But then this is a subdivision of the edge $st$, and since neither of the vertices are connected to $a$ or $b$ we cannot produce a face with three sides.

  Thus $asbt$ contains neither vertices from $S$ and not from $S$, so there are no other vertices inside. Since $ab$ cannot be an edge, $st$ must be an edge, as the graph is maximal planar. Repeat this argument for each such region, and we see that all the vertices of $S$ are contained in a single cycle, producing the bipyramid with vertex set $\{a, b\} \cup S$.

  Thus this bipyramid is a subgraph of the maximal planar graph. However, we cannot add any more vertices to any of the regions through a similar argument: if we have vertices $v_1, v_2, \ldots, v_i$ in the region enclosed by $ast$, none of these can be connected to $a$, so $sv_1v_2\cdots t$ is a path for some numbering of the vertices. This is a subdivision of the edge $st$, and since neither of these vertices can be connected to $a$, there must be no other vertices in the region $ast$.
\end{proof}

\section{Bounds through coloring}

The main result of this section is the following lemma, a corollary of Lemma \ref{selection}.

\begin{lemma}
  \label{coloring}
  For a graph $G$ with $n$ vertices, chromatic number $\chi$, metric dimension $\beta$, partitioned by alikeness to $s$ equivalence sets, we have $$\beta \leq \floor{\del{\frac{\chi-1}{\chi}}n} + n - s.$$
\end{lemma}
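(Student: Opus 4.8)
The plan is to pair a proper $\chi$-coloring of $G$ with Lemma \ref{selection}, exploiting the fact that a proper coloring keeps every color class independent. First I would fix a proper coloring with $\chi$ colors and let $C$ be a color class of maximum cardinality. By the pigeonhole principle $|C| \geq \ceil{\frac{n}{\chi}}$, and the identity $n - \ceil{\frac{n}{\chi}} = \floor{\del{\frac{\chi-1}{\chi}}n}$ shows that the complement $V(G) \setminus C$ has at most $\floor{\del{\frac{\chi-1}{\chi}}n}$ vertices.

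The resolving set I would construct is as follows: select every vertex outside $C$, and then, within each alike equivalence class, if two or more of its vertices happen to lie in $C$, additionally select all but one of them, leaving a single representative unselected. Under this rule every unselected vertex lies in $C$ and is the unique unselected member of its alike class.

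The count splits into two parts. The vertices selected ``for coloring'' number at most $\floor{\del{\frac{\chi-1}{\chi}}n}$ by the bound above. For the extra ``alike'' selections, an equivalence class $A$ contributes at most $\max(|A \cap C| - 1, 0) \leq |A| - 1$ vertices; summing over all classes gives at most $\sum (|A| - 1) = n - s$. Adding the two bounds yields the claimed inequality. To verify that the set resolves $G$, I would invoke Lemma \ref{selection} twice: every selected vertex is resolved by its first bullet, while for an unselected vertex $v$, since $v \in C$ and the coloring is proper all neighbors of $v$ lie outside $C$ and were selected, and since $v$ is the only unselected vertex of its alike class every vertex alike to $v$ was selected as well, so the second bullet applies.

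The main obstacle is the interaction between the two selection rules. The coloring step alone does not resolve a vertex $v \in C$ whose alike class has several representatives inside $C$, because those alike vertices would remain unselected; the delicate point is to show that repairing every such class never exceeds the $n - s$ budget, which is exactly the content of the per-class inequality $\max(|A \cap C| - 1, 0) \leq |A| - 1$.
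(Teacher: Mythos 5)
Your proposal is correct and follows essentially the same route as the paper: properly color with $\chi$ colors, select everything outside a largest color class (at most $\floor{\del{\frac{\chi-1}{\chi}}n}$ vertices), then select all but one representative per alike class among the remainder (at most $n-s$ vertices), and finish with Lemma \ref{selection}. Your write-up is in fact more careful than the paper's about why the second batch costs at most $n-s$, via the per-class bound $\max(|A\cap C|-1,0)\leq |A|-1$.
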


\begin{proof}
  Color $G$ with $\chi$ colors, and select all colors in the working set except for the most frequent color. This selects at most $\floor{\del{\frac{\chi - 1}{\chi}}n}$ vertices. Of the most common color, which is partitioned by alikeness to several equivalence sets, select all except one. This selects at most $n - s$ vertices. Finally, by Lemma \ref{selection}, we are done.
\end{proof}

With the lemma, we can prove the two remaining main results. 

\begin{proof}[Proof of Theorem \ref{second}]
  An outerplanar graph can be divided into biconnected components, connected by paths. Call a \emph{good} component one that has at least five vertices, and a \emph{bad} one otherwise. Each good component has no alike vertices by Lemma \ref{outerplanar}, and no path has alike vertices. We only need to consider the bad components.

  A bad component with three vertices cannot have two alike vertices as it must be $C_3$. The only case left is when the bad component has four vertices, so it is either $C_4$ or the diamond. In the case of $C_4$ we place a temporary edge connecting the alike vertices, and in the case of the diamond we temporarily remove the diagonal edge and place a temporary edge connecting the alike vertices.

  We then three-color the graph and apply Lemma \ref{coloring}, selecting at most $\floor{\frac{2n}{3}}$ vertices from each good component. We then restore the original graph and show that this selection of vertices is also a resolving set. Indeed, we only need to consider the temporary edges we added and removed, and show that they resolve the bad components:

  In the case of $C_4$, the two alike vertices will have different colors, so at least one of them will be selected. In the case of the diamond, the two alike vertices will have different colors as well, so at least one of them will be selected. It follows that the remaining vertices in the bad component are resolved by Lemma \ref{selection}. At most half of the vertices are selected in the bad component, which is still less than $\floor{\frac{2n}{3}}$.
\end{proof}

\begin{proof}[Proof of Theorem \ref{third}]
  By the four-color theorem, it is possible to four-color the maximal planar graph. If it is not a bipyramid, applying Lemma \ref{coloring} gives the desired bound.

  For the case when the graph is a bipyramid, we show that the metric dimension of a bipyramid is $\beta = \floor{\frac{2n}5} + 1$ for $n \geq 5$, which satisfies the required bound. Let it have the vertex set $\{a_1, a_2\} \cup \{b_1, b_2, \ldots, b_n\}$ and edge set $$\{a_ib_j | i \in [1, 2], j \in [1, n]\} \cup \{b_ib_{i+1} | i \in [1, n], b_{n+1} = b_1\}.$$

  Note that only $a_1$ and $a_2$ can resolve each other by Lemma \ref{selection}, thus we need to select one of them. Without loss of generality, suppose the resolving set is $\{a_1, b_{n_1}, b_{n_2}, \ldots, b_{n_k}\}.$ We make several observations:

  \begin{enumerate}
    \item Any $b_i$ has distance $1$ to its adjacent vertices, and distance $2$ to all other vertices.
    \item The difference $n_{i+1} - n_i$ is at most $4$. If it was greater than $5$, the vertices between $b_{n_i}$ and $b_{n_{i+1}}$ adjacent to neither would have distance $2$ to both, and by the first observation, it would have distance $2$ to all other vertices. Thus it cannot be resolved.
    \item There is at most one $i$ such that $n_{i+1} - n_i$ is equal to $4$. If there were two such $i$, by the first observation the vertices between $b_{n_i}$ and $b_{n_{i+1}}$ would both have distance $2$ to all non-adjacent vertices in the resolving set, and cannot be resolved.
    \item It is impossible for $n_{i+1} - n_i = n_i - n_{i-1} = 3$. Otherwise the vertices adjacent to $b_{n_i}$ would have distance $1$ to $b_{n_i}$ and distance $2$ to all others, and cannot be resolved.
  \end{enumerate}

  We first prove the lower bound. By observation 3 there is at most one $i$ such that $n_{i+1} - n_i$ is equal to $4$. Partition the vertices into groups with at most five vertices from $a_{n_i}$ to $a_i$ and from $a_{n_{i+1}}$ to $a_n$.

  By the pigeonhole principle, if less than $\floor{\frac{2n}{5}} + 1$ vertices are selected, there will be one set with only one vertex, and by observations 2 and 4 this is impossible. This is establishes the lower bound. For the upper bound, select the vertices $b_1, b_{5k}, b_{5k+2}, b_n$ for $k \in \NN$ as the resolving set, which can be easily verified to work.
\end{proof}

\section{Concluding remarks}

Note that for planar graphs in general we have $2 \leq \beta \leq n - 2$ for $n \geq 5$. The lower bound is achieved through $P_n$ and the upper bound through $K_{2, n-2}$. Thus it is impossible to provide bounds of the form $kn$ for planar graphs in general. 

We present the following conjecture for maximal planar graphs, based on small cases and the metric dimension of bipyramids computed in the proof of Theorem \ref{third}.

\begin{conjecture*}
  For maximal planar graphs, $\beta = \floor{\frac{2n}{5}} + O(1)$.
\end{conjecture*}

\bibliographystyle{amsplain}

\end{document}